\newtheorem{theorem}{Theorem}[section]
\newtheorem{lemma}[theorem]{Lemma}
\newtheorem{proposition}[theorem]{Proposition}
\def\currentbound{8896}
\begin{document}

\title{Only finitely many $s$-Cullen numbers are repunits for a fixed $s \ge 2$}

\author{Michael Filaseta \\
\normalsize Department of Mathematics,
University of South Carolina \\
\normalsize Columbia, SC 29208,
E-mail:  filaseta@math.sc.edu
\and
Jon Grantham \\
\normalsize Institute for Defense Analyses,
Center for Computing Sciences \\ 
\normalsize Bowie, Maryland 20715, 
E-mail: grantham@super.org
\and
Hester Graves \\
\normalsize Institute for Defense Analyses,
Center for Computing Sciences \\ 
\normalsize Bowie, Maryland 20715, 
E-mail:  hkgrave@super.org}

\maketitle 

\abstract{We show that for any integer $s \geq 2$, there are only finitely many $s$-Cullen numbers that are repunits.  
More precisely, for fixed $s \ge 2$, there are only finitely many integers $n$, $b$, and $q$ with $n \geq 2$, $b \geq 2$ and $q \geq 3$ such that \[C_{n,s} = ns^n + 1 = \frac{b^q -1}{b-1}.\] 
The proof is elementary and effective, and it is used to
show that there are no $s$-Cullen repunits, other than explicitly known ones, for all $s \in [2,\currentbound]$.}

\vskip 20pt\noindent
2020 AMS Subject Class.:  Primary: 11A63, Secondary: 11B39,~11A05
\vskip 3pt\noindent
Keywords:  $s$-Cullen numbers, repunits, cyclotomic polynomial
\vskip 10pt\noindent

\thispagestyle{empty}

\section{Introduction}

An \textit{$s$-Cullen number} is a number of the form $C_{n,s} = ns^n + 1$, where $s$ and $n$ are positive integers with $s \geq 2$. 
A \textit{repunit in an integer base $b \ge 2$} is a positive integer that we can write as 
\[
\frac{b^q -1}{b-1} = \sum_{j=0}^{q-1} b^j = b^{q-1} + b^{q-2} + \cdots + b + 1
 = (1 1 \cdots 1)_{b},
 \]
for some  positive integer $q$.  
We will refer to a positive integer $m$ as being a \textit{repunit} if there exists a base $b \ge 2$ for which $m$ is a repunit in base $b$
and the number of digits of $m$ in base $b$ is at least $3$.  This somewhat awkward digit requirement is motivated
by the fact that every integer $m \ge 3$ is a $2$ digit repunit in base $m-1$.  

In a previous paper \cite{ccr}, the second and third authors assumed a weak form of the $abc$-conjecture to show that 
the collection of all $s$-Cullen numbers as $s$ varies contains only finitely many repunits, outside of two exceptional families, 
which we describe next.  
The number $C_{1,s} = s+1$ is a repunit in a base $b \ge 2$ if and only if $s$ is one less than a repunit in a base $b$.
A more interesting example occurs when $s^{2}$ is a triangular number $b(b+1)/2$.  In this case, 
\[
C_{2,s}=2s^2+1=2\frac{b(b+1)}2+1=b^2+b+1
\]
is a repunit in base $b$ with three digits.  
Euler \cite{euler} was the first to show that 
there are infinitely many examples of such $s$, the smallest of which are $1$, $6$, $35$, $204$ and $1189$. 
A proof can be obtained by looking at solutions to the Pell equation $x^{2} - 8y^{2} = 1$ with $x$ odd.

Here, we provide an unconditional and effective proof of the following.

\begin{theorem}\label{themaintheorem}
For each integer $s \ge 2$, there are at most finitely many $s$-Cullen numbers $C_{n,s}$ that are repunits.
\end{theorem}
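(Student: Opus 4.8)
The plan is to fix $s\ge 2$ and prove that $n$ is bounded, effectively, in any solution of $ns^n+1=(b^q-1)/(b-1)$ with $n\ge 2$, $b\ge 2$, $q\ge 3$. This suffices: once $n$ is bounded, $q<1+\log_2(ns^n+1)$ is bounded, and since $(b^q-1)/(b-1)=1+b+\cdots+b^{q-1}$ is strictly increasing in $b$, each pair $(n,q)$ leaves at most one $b$. The identity I would build on is
\[
ns^n=b+b^2+\cdots+b^{q-1}=bR,\qquad R:=1+b+\cdots+b^{q-2}=\frac{b^{q-1}-1}{b-1},
\]
together with $\gcd(b,R)=1$ (as $R\equiv 1\pmod b$) and the trivial bound $b^{q-1}<(b^q-1)/(b-1)=ns^n+1$, which gives $q-1<\log_2(ns^n+1)$. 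For each prime $p\mid s$ one has $v_p(bR)=v_p(n)+n\,v_p(s)\ge n$, so $p$ divides exactly one of $b,R$, carrying the full valuation $v_p(n)+n\,v_p(s)$ there. Setting $u=\prod_{p\mid s,\;p\mid b}p^{v_p(s)}$ and $v=s/u$, these are coprime with $u^n\mid b$ and $v^n\mid R$, and $bR=nu^nv^n$ forces $u^n\le b\le nu^n$ and $v^n\le R\le nv^n$. I would then split on the size of $u$.

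\emph{Case $u\ge 2$.} Here $b\ge 2^n$, so $q-1<\log_2(ns^n+1)/n\to\log_2 s$ and $q$ is bounded in terms of $s$ for large $n$. From $R\ge b^{q-2}\ge u^{n(q-2)}$ and $R\le nv^n$ I get $(u^{q-2}/v)^n\le n$, which (since $u\ge 2$ and $v\le s$) forces $u^{q-2}\le v$, i.e.\ $u^{q-1}\le s$; dually $R\le (q-1)b^{q-2}\le (q-1)(nu^n)^{q-2}$ with $R\ge v^n$ gives $(v/u^{q-2})^n\le (q-1)n^{q-2}$, and with $q$ bounded this forces $v\le u^{q-2}$, i.e.\ $s\le u^{q-1}$. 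Hence $s=u^{q-1}$ and $v=u^{q-2}$, so $\gcd(u,v)=u\ge 2$, contradicting $\gcd(u,v)=1$.

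\emph{Case $u=1$.} Now no prime of $s$ divides $b$, so every prime of $s$ divides $R$; thus $v=s$, $s^n\mid R$, and $b=ns^n/R\le n$ — only a \emph{polynomial} bound on $b$, but that will be enough. Fix $p\mid s$; then $p^{n\,v_p(s)}\mid R$ with $p\nmid b$. With $d=\mathrm{ord}_p(b)$, necessarily $d\mid q-1$ (else $p\nmid b^{q-1}-1$, hence $p\nmid R$) and $d\mid p-1$, so $d\le s-1$. Lifting the exponent gives, for odd $p$, $v_p(R)=v_p(b^d-1)+v_p((q-1)/d)$ if $d\ge 2$ and $v_p(R)=v_p(q-1)$ if $d=1$; for $p=2$ it forces $q$ odd with $v_2(R)=v_2(b+1)+v_2(q-1)-1$. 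Since $v_p(q-1)\le\log_2(q-1)\le\log_2\log_2(ns^n+1)=O_s(\log n)$, the constraint $v_p(R)\ge n$ yields, in the three cases, $b^d-1\ge p^{\,n-O_s(\log n)}$, or $n\le O_s(\log n)$, or $b+1\ge 2^{\,n-O_s(\log n)}$. All three are impossible for large $n$: the middle one is immediate, and for the others $b^d-1\le n^{s-1}$ (as $b\le n$ and $d\le s-1$) while $b+1\le n+1$, both far below $2^{n/2}$ eventually.

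Combining the cases bounds $n$ effectively in terms of $s$, which proves the theorem. I expect the main obstacle to be precisely the case $u=1$: there $q$ is not bounded a priori (it can be of size $\approx n\log_2 s$), and what makes the argument go through is the interplay of two facts — $b$ is only polynomially large in $n$, while the multiplicative order of $b$ modulo each prime $p\mid s$ is at most $s-1$ — so $b^{\mathrm{ord}_p(b)}-1$ is simply too small to carry the near-maximal power of $p$ that $R$ must contain. The prime $2$ has to be treated separately because lifting the exponent takes a different form there, routing the same contradiction through $v_2(b+1)$.
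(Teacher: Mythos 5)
Your proposal is correct, and it reaches the finiteness statement by a genuinely different route for the hardest regime. Your opening decomposition is the same as the paper's proof of Theorem~\ref{theorem3}: writing $ns^n=bf_q(b)$ with $\gcd(b,f_q(b))=1$, splitting $s=uv$ according to which primes of $s$ divide $b$, and getting $u^n\mid b$, $v^n\mid f_q(b)$ with the cofactors multiplying to $n$; your two size inequalities in the case $u\ge 2$ are essentially the paper's cases $u^{q-2}>v$ and $u^{q-2}<v$, sharpened by the observation that $u\ge 2$ forces $b\ge 2^n$ and hence $q\le \log_2 s+O(1)$ for large $n$ (the paper exploits exactly this in the Proposition of Section~4, but only for computational purposes). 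The real divergence is how you kill the large-$q$ regime, which in your organization survives only when $u=1$: there you use $b\le n$, the bound $\mathrm{ord}_p(b)\mid p-1\le s-1$ for each $p\mid s$, and lifting the exponent to show $v_p(f_q(b))\le v_p(b^{d}-1)+v_p((q-1)/d)$ (with the usual separate form at $p=2$), which is $O_s(\log n)$ and cannot reach the required valuation $n$. The paper instead handles all large $q$ uniformly in $u$ via Theorem~\ref{lnlq}, using the prime structure of $\Phi_{q-1}(b)$ (Proposition~\ref{cyclo}), Roitman's bound $\Phi_{q-1}(b)\ge b^{\phi(q-1)}/2$, and the Rosser--Schoenfeld estimate for $\phi$. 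Your route is arguably more self-contained (no cyclotomic or analytic input beyond LTE) and, because $q$ is bounded by roughly $\log_2 s$ rather than $q_s=\max\{7,s\}$ when $u\ge2$, it avoids the very large thresholds $\mathcal{N}_s$ that the paper must work around in Section~4; the paper's cyclotomic argument, on the other hand, yields the clean uniform statement ``$q\le q_s$ or $n\le N_s$'' with $N_s\approx(1+\log_2 s)^2$, independent of the factorization of $s$. The only work left in your sketch is bookkeeping: making the $O_s(\cdot)$ constants explicit (all steps permit this, so effectiveness is preserved), and noting that the concluding contradiction in the $u\ge2$ case, namely $v=u^{q-2}$ with $\gcd(u,v)=1$ and $q\ge 3$, is exactly the paper's observation that coprimality rules out $u^{q-2}=v$.
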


After establishing Theorem~\ref{themaintheorem}, we demonstrate computationally that for all integers $s \in [2, \currentbound] $, 
there are no $s$-Cullen numbers which are repunits other than those described before Theorem~\ref{themaintheorem}.
In general, for a fixed $s\ge 2$, we are interested in integers $b \ge 2$, $q \ge 3$ and $n \ge 2$ satisfying
\begin{equation} \label{eq:1}
n s^{n} = b (b^{q-2} + \cdots  + b + 1). 
\end{equation}
In the next two sections, we will state an explicit version of Theorem~\ref{themaintheorem}; in particular, we supply an explicit upper bound
on $n$ for which \eqref{eq:1} holds for a fixed $s \ge 2$.

\section{Upper bounds on $n$ and $q$}

Let $s$, $b$, $q$ and $n$ be integers satisfying
$s \ge 2$, $b \ge 2$, $q \ge 3$ and $n \ge 1$.  
Set
\[
q_s = \max\{ 7,s \}, \quad
c_s = 1+\log_2(s) \quad \text{ and } \quad
N_s = \lfloor \max\{c_s^{2},127\}\rfloor.
\]
We also make use of the notation $\Phi_{m}(x)$ for the $m$-th cyclotomic polynomial.  
Our aim in this section is to establish the following.

\begin{theorem}\label{lnlq}
Let $s$, $b$, $q$ and $n$ be integers satisfying the conditions
$s \ge 2$, $b \ge 2$, $q \ge 3$, $n \ge 1$ and  \eqref{eq:1}.  
Then either $q \le q_{s}$ or $n \le N_{s}$.
\end{theorem}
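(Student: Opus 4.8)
The plan is to rewrite \eqref{eq:1} as $ns^{n}=bR'$, where
$R'=\frac{b^{q-1}-1}{b-1}=1+b+\cdots+b^{q-2}$, and to argue by cases on $\gcd(b,s)$ and on the multiplicative orders $\mathrm{ord}_{p}(b)$ for the primes $p\mid s$. Two elementary inputs are used throughout: the size bounds $b^{q-1}<ns^{n}+1\le b^{q}$, which are immediate from \eqref{eq:1}; and valuation computations via the lifting‑the‑exponent lemma together with Zsigmondy's theorem on primitive prime divisors of $b^{m}-1$. Since $\gcd(b,R')=\gcd(b,1)=1$, the prime factors of $b$ and of $R'$ are disjoint, and this is what drives the case split.

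\emph{Case $\gcd(b,s)>1$.} Choose a prime $p\mid\gcd(b,s)$. As $p\nmid R'$, the identity $ns^{n}=bR'$ gives $v_{p}(b)=v_{p}(ns^{n})=v_{p}(n)+n\,v_{p}(s)\ge n$, so $b\ge 2^{n}$; substituting into $b^{q-1}<ns^{n}+1<2ns^{n}$ and taking base‑$2$ logarithms yields $q-1<\frac{1+\log_{2}n}{n}+\log_{2}s\le 1+\log_{2}s$, and the definition of $q_{s}$ then gives $q\le q_{s}$. So from now on $\gcd(b,s)=1$; then $b\mid ns^{n}$ forces $b\mid n$, so $b\le n$, and \eqref{eq:1} becomes $R'=\frac{n}{b}\,s^{n}$, in particular $s^{n}\mid R'$.

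\emph{Case $\gcd(b,s)=1$ and some prime $p\mid s$ divides $b-1$.} Lifting the exponent in $v_{p}(R')=v_{p}(b^{q-1}-1)-v_{p}(b-1)$ computes $v_{p}(R')$ exactly: it equals $v_{p}(q-1)$ when $p$ is odd, and $v_{2}(b+1)+v_{2}(q-1)-1$ (with $q$ forced odd) when $p=2$. Since $p^{n}\mid s^{n}\mid R'$, this forces either $q-1\ge 3^{n}$ or $(b+1)(q-1)\ge 2^{n+1}$. Neither can coexist with the upper bounds $b\le n$ and $q-1<1+\log_{2}n+n\log_{2}s$ once $n>N_{s}$: the condition $n>(1+\log_{2}s)^{2}$ rewrites as $\log_{2}s<\sqrt n-1$, and feeding this in collapses the two options to $3^{n}<n^{3/2}$ and $2^{n}<n^{5/2}$, both of which fail for every $n\ge 127$. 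Hence if $n>N_{s}$ this case does not arise.

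\emph{Case $\gcd(b,s)=1$ and $\mathrm{ord}_{p}(b)\ge 2$ for every prime $p\mid s$} (which forces $s$ to be odd). Here I claim $q\le q_{s}$ holds unconditionally. If not, $q>q_{s}=\max\{7,s\}$, so $q-1\ge 7$ and Zsigmondy yields a prime $r$ with $\mathrm{ord}_{r}(b)=q-1$; then $r\equiv 1\pmod{q-1}$ gives $r\ge q>s$, so $r\nmid s$, while $r\mid R'=\frac{n}{b}s^{n}$ forces $r\mid\frac{n}{b}$, whence $q\le r\le n$. Separately, writing $d_{p}=\mathrm{ord}_{p}(b)\mid\gcd(p-1,q-1)$ for $p\mid s$, lifting the exponent gives $n\,v_{p}(s)\le v_{p}(b^{q-1}-1)=v_{p}(b^{d_{p}}-1)+v_{p}((q-1)/d_{p})<d_{p}\log_{p}b+v_{p}(q-1)$; multiplying by $\log p$, summing over $p\mid s$, and using $\sum_{p\mid s}d_{p}\le\sum_{p\mid s}(p-1)\le s-1$ together with $b\le n$ and $q-1<n$, the whole inequality reduces to $n\log s<s\log n$, hence $n<s$ since $x\mapsto x/\log x$ is increasing for $x>e$ and $n,s\ge 3$. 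But then $q\le n<s\le q_{s}<q$, which is absurd; so $q\le q_{s}$, and the three cases together give the theorem. I expect the technical crux to be the middle case: the two thresholds defining $N_{s}$ must be calibrated so that the exponential‑in‑$n$ lower bounds forced by $p^{n}\mid R'$ genuinely beat the polynomial‑in‑$n$ upper bounds coming from $b^{q-1}<ns^{n}+1$ and $b\le n$, the coupling $n>(1+\log_{2}s)^{2}$ being exactly what absorbs the $n\log_{2}s$ term; the cleanest point is the last case, where Zsigmondy pins $q\le n$ while $s^{n}\mid R'$ pins $n<s$, and these are irreconcilable with $q>q_{s}\ge s$.
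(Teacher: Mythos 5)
Your proposal is correct in substance, but it takes a genuinely different route from the paper. The paper gives a single, case-free argument: assuming $q>q_s$ and $n>N_s$, it uses the structure of the prime divisors of $\Phi_{q-1}(b)$ (Proposition~\ref{cyclo}) to show that $\Phi_{q-1}(b)/\gcd(\Phi_{q-1}(b),q-1)$ is coprime to $s$ and hence divides $n$, then combines the value bound $\Phi_{q-1}(b)\ge b^{\phi(q-1)}/2$ (Proposition~\ref{roitman}) with the Rosser--Schoenfeld-type estimate $\phi(q-1)>q/(3.4504\log\log q)$ and the elementary bounds $b^q>s^n$, $q<c_sn$, to force $n\log s$ below an explicit quadratic expression in $\log n$, contradicting $n>N_s\ge 127$. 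You instead split into three cases according to $\gcd(b,s)$ and whether some prime of $s$ divides $b-1$, replacing the cyclotomic size estimates by lifting-the-exponent valuation counts together with one use of Zsigmondy; note that Zsigmondy's role here (producing a prime $\equiv 1\pmod{q-1}$ dividing the repunit factor, hence $\ge q$) is essentially the content of Proposition~\ref{cyclo}, so the deepest input is shared. What your route buys: you avoid the totient lower bound and the bound on $\Phi_{q-1}(b)$ altogether, and in your first and third cases you obtain $q\le q_s$ with no hypothesis on $n$, so the threshold $N_s$ is only needed in the middle case; the cost is the case bookkeeping. A few steps deserve an explicit line before this is airtight, though none is a real gap: in the first case, check that $q<2+\log_2 s$ does imply $q\le\max\{7,s\}$ (true since $2+\log_2 s\le 7$ for $s\le 32$ and $2+\log_2 s<s$ otherwise); in the third case, the conclusion $n<s$ from $n\log s<s\log n$ needs $n\ge 3$, which you get for free since $q\le n$ and $q\ge 3$; and the compressed numerics ($3^n<n^{3/2}$ and $2^n<n^{5/2}$ failing for $n\ge 127$) should be stated as the explicit contradictions they provide.
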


\subsection{Some preliminaries}

\begin{lemma}\label{lemma1}
Under the conditions of Theorem~\ref{lnlq}, we have
\[
c_sn = (1 + \log_2(s)) n > q.
\]
\end{lemma}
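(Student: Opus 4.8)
The plan is to read \eqref{eq:1} as a lower bound $ns^n \ge 2^q - 2$, pass to base-two logarithms, and then clear away the resulting $\log_2 n$ term using the trivial estimate $n \ge 1 + \log_2 n$ — which is exactly what the $1$ in $c_s = 1+\log_2 s$ is there to absorb.

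First I would bound the right-hand side of \eqref{eq:1} from below. Expanding the product gives $ns^n = b + b^2 + \cdots + b^{q-1}$, and since $b \ge 2$ each term satisfies $b^j \ge 2^j$, so
\[
ns^n \;=\; b + b^2 + \cdots + b^{q-1} \;\ge\; 2 + 2^2 + \cdots + 2^{q-1} \;=\; 2^q - 2 .
\]
Using $q \ge 3$ we have $2^q - 2 > 2^{q-1}$ (indeed $2^q - 2 - 2^{q-1} = 2^{q-1} - 2 \ge 2$), hence $\log_2(ns^n) > q-1$, that is, $\log_2 n + n\log_2 s > q - 1$.

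Then I would invoke $2^{n-1} \ge n$ for every integer $n \ge 1$ (a one-line induction), equivalently $n \ge 1 + \log_2 n$. Adding $n\log_2 s$ to both sides and combining with the previous step,
\[
c_s n \;=\; (1+\log_2 s)\,n \;=\; n + n\log_2 s \;\ge\; 1 + \log_2 n + n\log_2 s \;=\; 1 + \log_2(ns^n) \;>\; 1 + (q-1) \;=\; q ,
\]
which is precisely the assertion of the lemma.

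I do not expect a genuine obstacle here: the only choices requiring care are the strict inequality $2^q - 2 > 2^{q-1}$, which is the single place the hypothesis $q \ge 3$ enters, and the inequality $n \ge 1 + \log_2 n$, which holds with equality at $n = 1$ and $n = 2$ and is therefore exactly tight for this argument.
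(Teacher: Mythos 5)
Your proof is correct and follows essentially the same route as the paper: both lower-bound $ns^n$ using $b \ge 2$ (the paper via $ns^n > b^{q-1} \ge 2^{q-1}$, you via the full geometric sum $2^q-2 > 2^{q-1}$) and then absorb the $\log_2 n$ term with the same elementary inequality $n \ge 1 + \log_2 n$. The differences are purely cosmetic.
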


\begin{proof}
Since $b \ge 2$, $q \ge 3$ and \eqref{eq:1} hold, we see that 
$2ns^n > 2b^{q-1} \ge 2^{q}$.  
One checks that $x - \log_{2}(x)$ is increasing for $x \ge 2$ to deduce that $n \ge \log_{2}(n) + 1$ for all positive integers $n$.  
Thus,
\[
2^{c_sn } = 2^{(1 + \log_2(s)) n} \ge 2^{\log_{2}(n) + 1 + \log_2(s) n} = 2 n s^{n} > 2^{q}.
\]
The lemma follows.
\end{proof}

\begin{lemma}\label{lemma2}
Under the conditions of Theorem~\ref{lnlq}, we have $b^q > s^n$.
\end{lemma}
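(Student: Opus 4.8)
The plan is to read the inequality off directly from a rearrangement of the defining relation. Multiplying both sides of $ns^n + 1 = (b^q-1)/(b-1)$ by $b-1$ gives the identity $b^q - 1 = (b-1)(ns^n + 1)$. Since $b \ge 2$ we have $b - 1 \ge 1$, and since $n \ge 1$ we have $ns^n \ge s^n$; multiplying these together yields $b^q - 1 \ge ns^n + 1 \ge s^n + 1$, hence $b^q \ge s^n + 2 > s^n$, which is the claim. Equivalently, one can start from \eqref{eq:1} in the form $b^q = ns^n(b-1) + b$ and apply the same bounds $n \ge 1$, $b - 1 \ge 1$, $b \ge 2$ to get $b^q \ge s^n + b > s^n$.

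There is no genuine obstacle to overcome; the only point requiring a moment's care is that the hypotheses of Theorem~\ref{lnlq} supply exactly $b \ge 2$ and $n \ge 1$ (note: $n \ge 1$, not $n \ge 2$), so the stated conclusion follows on the nose without any case distinction on small $n$ or $b$. The lemma is a convenient bridge to the earlier Lemma~\ref{lemma1}: combined with $c_s n > q$ it will let one compare the sizes of $b^q$ and $s^n$ on a logarithmic scale in the subsequent cyclotomic arguments.
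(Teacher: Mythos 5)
Your proof is correct and amounts to the same one-line argument as the paper, which simply notes $b^q > b(b^{q-2}+\cdots+b+1) = ns^n \ge s^n$ directly from \eqref{eq:1}; your multiplication by $b-1$ is just a cosmetic rearrangement of the same observation.
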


\begin{proof}
The result follows from $b^q > b (b^{q-2} + \cdots  + b + 1)= n s^n \ge s^n$.
\end{proof}

The next result is well-known.  For a proof, one can see the corollary in Section~5 of \cite{dickson}
and the reference for Lemma~3 in \cite{tv}.

\begin{proposition}\label{cyclo} 
Let $q \ge 4$ and $b \ge 2$ be integers, and let $p$ be a prime.  
If $p | \Phi_{q-1}(b)$, then either $p \equiv 1 \pmod{q-1}$ or $p$ is the largest prime divisor of $q-1$ and $p^2 \nmid \Phi_{q-1}(b)$.  
All primes dividing $\Phi_{q-1}(b)$ are $\geq q$, with at most one exception, counting multiplicity.
\end{proposition}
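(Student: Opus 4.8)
The plan is to control a prime divisor $p$ of $\Phi_{q-1}(b)$ through the multiplicative order of $b$ modulo $p$. Write $m = q-1 \ge 3$ throughout. Since $\Phi_m(x)$ divides $x^m - 1$, any prime $p \mid \Phi_m(b)$ satisfies $p \mid b^m - 1$; in particular $p \nmid b$, so $b$ has a well-defined multiplicative order $d := \operatorname{ord}_p(b)$. From $p \mid b^m - 1$ we get $d \mid m$, and Fermat's little theorem gives $d \mid p - 1$, whence $d \le p - 1 < p$. This last inequality is what ultimately forces $p$ to be the \emph{largest} prime divisor of $m$ in the exceptional case.

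First I would establish the dichotomy that $m = d$ or $m/d$ is a power of $p$. Write $m = p^a m'$ with $p \nmid m'$ and $a \ge 0$. Using the standard reductions $\Phi_{p\ell}(x) = \Phi_\ell(x^p)/\Phi_\ell(x)$ for $p \nmid \ell$ together with $\Phi_{p^a \ell}(x) = \Phi_{p\ell}\!\left(x^{p^{a-1}}\right)$, one obtains the congruence
\[
\Phi_m(x) \equiv \Phi_{m'}(x)^{\,p^{a-1}(p-1)} \pmod{p} \qquad (a \ge 1),
\]
while $\Phi_m \equiv \Phi_{m'}$ when $a = 0$. Hence $p \mid \Phi_m(b)$ if and only if $\Phi_{m'}(b) \equiv 0 \pmod p$; since $p \nmid m'$ makes $\Phi_{m'}$ separable modulo $p$, this happens exactly when $b$ is a primitive $m'$-th root of unity modulo $p$, i.e. exactly when $d = m'$. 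Thus $m/d = p^a$ is a power of $p$.

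Now I would split into the two cases. If $a = 0$, then $m = d \mid p - 1$, so $p \equiv 1 \pmod{m}$, i.e. $p \equiv 1 \pmod{q-1}$; since $p > 1$ this forces $p \ge m + 1 = q$. If $a \ge 1$, then $p \mid m$, and from $m = d\,p^a$ together with the fact that every prime dividing $d$ is at most $d < p$, we conclude that $p$ is the largest prime divisor of $m = q-1$ (so $p \le m < q$). It remains to show $p^2 \nmid \Phi_m(b)$ here. For this I would use lifting-the-exponent: for odd $p$, since $p \mid b^d - 1$, we have $v_p\!\left(b^{d p^j} - 1\right) = v_p\!\left(b^d - 1\right) + j$, and telescoping along the chain $d, dp, \dots, dp^a$ of divisors of $m$ (which, by the dichotomy, carry all the $p$-adic content of $x^m - 1$) yields $v_p(\Phi_{dp^j}(b)) = 1$ for each $j \ge 1$; in particular $v_p(\Phi_m(b)) = 1$. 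The case $p = 2$, which can occur only when $m$ is a power of $2$, is handled directly from $\Phi_{2^j}(b) = b^{2^{j-1}} + 1 \equiv 2 \pmod 4$ for odd $b$.

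Finally, the last assertion follows by collecting the cases. Every prime $p \mid \Phi_m(b)$ either satisfies $p \equiv 1 \pmod{q-1}$, hence $p \ge q$, or else is the single largest prime divisor $P$ of $q-1$, which satisfies $P \le q - 1 < q$ and occurs to multiplicity at most $1$. Therefore at most one prime factor of $\Phi_{q-1}(b)$, counted with multiplicity, can fail to be $\ge q$, which is precisely the claimed exception. I expect the genuine obstacle to be the multiplicity-one statement $p \,\|\, \Phi_m(b)$ in the ramified case $p \mid m$: the order and congruence bookkeeping is routine, but pinning down the exact $p$-adic valuation, and separating the $p = 2$ sub-case, is where the real care is needed.
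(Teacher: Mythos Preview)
Your argument is correct and self-contained. The paper does not prove this proposition at all; it simply states it as ``well-known'' and cites external references (a corollary in \cite{dickson} and the source of Lemma~3 in \cite{tv}). So there is no internal proof to compare against --- you have supplied a full proof where the paper only gave a pointer to the literature.

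Your route via the multiplicative order $d=\operatorname{ord}_p(b)$, the reduction $\Phi_m\equiv\Phi_{m'}^{\,p^{a-1}(p-1)}\pmod p$, and the lifting-the-exponent telescoping is the standard one and is cleanly executed. One cosmetic remark: your claim $\Phi_{2^j}(b)=b^{2^{j-1}}+1\equiv 2\pmod 4$ for odd $b$ fails at $j=1$ (e.g.\ $b=3$ gives $4$), but the hypothesis $m=q-1\ge 3$ forces $j\ge 2$, where $b^{2^{j-1}}\equiv 1\pmod 8$ and the congruence holds; so the argument is intact in context. The final collection of cases is also fine, since the exceptional prime $P\mid q-1$ automatically satisfies $P<q$ and appears with multiplicity exactly one.
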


For the next result, see Theorem~5 in \cite{tv}.

\begin{proposition}\label{roitman}
Let $q \ge 3$ and $b \ge 2$ be integers.  
We have $\Phi_{q-1}(b)\ge b^{\phi(q-1)}/2$.
\end{proposition}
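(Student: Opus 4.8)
Write $m=q-1$, so $m\ge 2$, and note $\Phi_m(b)>0$. The plan is to pass to an (essentially) infinite–product estimate. Starting from the Möbius factorisation $\Phi_m(x)=\prod_{d\mid m}(x^d-1)^{\mu(m/d)}$ together with the identity $\sum_{d\mid m}\mu(m/d)\,d=\phi(m)$ (both standard consequences of $x^{n}-1=\prod_{d\mid n}\Phi_d(x)$ and $\sum_{d\mid n}\phi(d)=n$, and here $\mu$ is the Möbius function), and using that each $b^{d}-1\ge 1>0$, one obtains
\[
\frac{\Phi_m(b)}{b^{\phi(m)}}=\prod_{d\mid m}\bigl(1-b^{-d}\bigr)^{\mu(m/d)}=:R(b),
\]
so it suffices to prove $R(b)\ge\tfrac12$. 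I would first isolate the term $d=1$, writing $R(b)=(1-b^{-1})^{\mu(m)}\prod_{d\mid m,\ d\ge 2}(1-b^{-d})^{\mu(m/d)}$. In the second product, discarding the factors with exponent $-1$ (each exceeds $1$) and enlarging the index set to all $d\ge 2$ only decreases it, and $1-b^{-d}\ge 1-2^{-d}$; this gives the convenient bounds $\prod_{d\mid m,\ d\ge 2}(1-b^{-d})^{\mu(m/d)}\ge\prod_{d\ge 2}(1-b^{-d})$ and, since $(1-b^{-1})^{\mu(m)}\ge 1-b^{-1}$ in all cases, $R(b)\ge\prod_{d\ge 1}(1-b^{-d})$.

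For $b\ge 3$ this already finishes the argument: $\prod_{d\ge 1}(1-b^{-d})\ge\prod_{d\ge 1}(1-3^{-d})$, and a one–line tail estimate ($\prod_{d\ge 3}(1-3^{-d})\ge 1-\sum_{d\ge 3}3^{-d}=\tfrac{17}{18}$) gives $\prod_{d\ge 1}(1-3^{-d})\ge \tfrac23\cdot\tfrac89\cdot\tfrac{17}{18}=\tfrac{136}{243}>\tfrac12$. The remaining case is $b=2$, where $\prod_{d\ge 1}(1-2^{-d})\approx 0.289$ is too small and one must be more careful about the factor $d=1$. If $\mu(m)\le 0$, then $(1-2^{-1})^{\mu(m)}\ge 1$, so $R(2)\ge\prod_{d\ge 2}(1-2^{-d})\ge\tfrac34\cdot\tfrac78\cdot\tfrac78=\tfrac{147}{256}>\tfrac12$ (again using $\prod_{d\ge 4}(1-2^{-d})\ge 1-\sum_{d\ge 4}2^{-d}=\tfrac78$). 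If $\mu(m)=1$, then $m=p_1p_2\cdots p_k$ is squarefree with $k$ even, $p_1<p_2<\cdots<p_k$, and the trick is to retain the beneficial factors: for each prime $p_i\mid m$ we have $\mu(m/p_i)=(-1)^{k-1}=-1$, so the divisor $d=p_i$ contributes $(1-2^{-p_i})^{-1}\ge 1+2^{-p_i}$, while every other proper divisor $d\ge 2$ of $m$ is a product of at least two of the $p_i$, hence $d\ge p_1p_2$. Keeping only the factor $d=p_1$ and bounding the rest by $\prod_{d\ge p_1p_2}(1-2^{-d})\ge 1-2^{-p_1p_2+1}$ gives
\[
R(2)\ \ge\ \tfrac12\,\bigl(1+2^{-p_1}\bigr)\bigl(1-2^{-p_1p_2+1}\bigr)\ \ge\ \tfrac12 ,
\]
the last inequality holding because $p_1p_2\ge 3p_1>p_1+2$.

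The main obstacle is exactly this last case, $b=2$ with $\mu(m)=1$: the purely analytic bound $R(2)\ge\prod_{d\ge 1}(1-2^{-d})$ falls just short of $\tfrac12$, so the argument has to exploit an arithmetic fact — namely that, apart from $1$ and the primes $p_i$ themselves, $m$ has no small divisors — to squeeze out the missing room. Everything else (the Möbius factorisation, the monotonicity $1-b^{-d}\ge 1-2^{-d}$, and the elementary tail estimates for $\prod(1-b^{-d})$) is routine.
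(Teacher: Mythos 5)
Your argument is correct, and it is worth noting that the paper does not prove this proposition at all: it simply cites Theorem~5 of the reference \cite{tv}, so your proposal supplies a self-contained elementary proof where the paper relies on an external result. Your route --- writing $\Phi_{q-1}(b)/b^{\phi(q-1)}=\prod_{d\mid m}(1-b^{-d})^{\mu(m/d)}$ with $m=q-1$, discarding the factors with exponent $-1$, and comparing with the full product $\prod_{d\ge 1}(1-b^{-d})$ --- disposes of $b\ge 3$ and of $b=2$ with $\mu(m)\le 0$ by routine tail estimates, and you correctly identify that the only delicate case is $b=2$ with $\mu(m)=1$, where $\prod_{d\ge 1}(1-2^{-d})\approx 0.289$ is insufficient and one must use the arithmetic of the divisors: your observation that each prime divisor $p_i$ of $m$ contributes a factor $(1-2^{-p_i})^{-1}$ (since $\mu(m/p_i)=-1$ when $\mu(m)=1$) while all other divisors exceeding $1$ are at least $p_1p_2$ gives $R(2)\ge\tfrac12(1+2^{-p_1})(1-2^{-p_1p_2+1})\ge\tfrac12$, and the final inequality does hold (since $1+2^{-p_1}<2$, it suffices that $p_1p_2\ge p_1+2$, which follows from $p_1p_2\ge 3p_1$). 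All the numerical constants you quote ($136/243$, $147/256$) check out, and the bound $\prod_{d\ge N}(1-x_d)\ge 1-\sum_{d\ge N}x_d$ is applied legitimately. In short, the proof is complete and gives the stated bound $\Phi_{q-1}(b)\ge b^{\phi(q-1)}/2$ for all $q\ge 3$, $b\ge 2$; compared with the paper's citation, your approach has the advantage of making the whole argument of Section~2 self-contained at the cost of a page of elementary casework.
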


\begin{lemma}\label{bound_on_phi} 
Let $q \ge 7$ be an integer.   
If $q \not\in \{ 8, 10, 12, 18, 30, 42, 60 \}$, then $\phi(q) > q/(2.61 \log \log q)$.
\end{lemma}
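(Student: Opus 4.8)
The plan is to reduce the problem to primorials. Since $q/\phi(q)=\prod_{p\mid q}(1-1/p)^{-1}$ depends only on the set $S$ of primes dividing $q$, and since $p\mapsto p/(p-1)$ is decreasing, this product is, for a fixed number $k=|S|$ of distinct prime divisors, largest when $S$ consists of the $k$ smallest primes. Writing $p_i$ for the $i$-th prime, $P_k=p_1p_2\cdots p_k$ for the $k$-th primorial, and $M_k=P_k/\phi(P_k)=\prod_{i=1}^{k}p_i/(p_i-1)$, every integer $q$ with exactly $k$ distinct prime divisors then satisfies $q/\phi(q)\le M_k$ and $q\ge P_k$. As $t\mapsto\log\log t$ is increasing for $t>1$, the inequality $M_k<2.61\log\log P_k$ forces $q/\phi(q)<2.61\log\log q$ for all such $q$. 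A short computation shows this primorial inequality holds for every $k\ge 4$ but fails for $k=1,2,3$, so those small values of $k$ need separate treatment.

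\medskip
For $k\ge 4$, I would check $M_k<2.61\log\log P_k$ by hand for $k=4,5,\dots,k_0$ with $k_0$ small, and for $k>k_0$ invoke effective forms of Mertens' third theorem and of the lower bound for Chebyshev's function $\theta$: from $M_k=\prod_{p\le p_k}(1-1/p)^{-1}<e^{\gamma}\log p_k\,(1+1/\log^{2}p_k)$ and $\log\log P_k=\log\theta(p_k)>\log p_k-O(1)$, combined with $e^{\gamma}=1.781\ldots<2.61$, the inequality follows once $p_k$ is large enough, leaving only finitely many $k$ to verify. The one genuinely delicate point is $k=4$: there $M_4=35/8=4.375$ while $2.61\log\log 210=2.61\log(5.3471\ldots)=4.3758\ldots$, so the inequality holds by a margin under $10^{-3}$ — and this is precisely what dictates the value of the constant $2.61$ together with the exact exceptional set.

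\medskip
For $k\le 3$, where the primorial inequality need not hold, I would argue directly. Here $q/\phi(q)\le M_3=15/4$, so a violator $q$ of the asserted bound would satisfy $2.61\log\log q\le 15/4$, hence $\log\log q\le 15/(4\cdot 2.61)$ and therefore $q\le\exp\exp(15/(4\cdot 2.61))<68$; since every integer below $210$ has at most three distinct prime factors, this together with the previous paragraph leaves only the finitely many $q$ with $7\le q\le 67$ to examine. A direct check of those then shows the bound holds in each case except exactly for $q\in\{8,10,12,18,30,42,60\}$, which completes the proof. I expect the only real obstacle throughout to be numerical delicacy — several of these small cases (notably $q=24$ and $q=66$, which are satisfied only barely, and $q=42$ and $q=60$, which just fail), and most of all $q=210$ above, leave very little room — so the relevant estimates must be carried out carefully — rather than anything conceptual.
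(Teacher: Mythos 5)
Your proposal is correct, but it follows a genuinely different route from the paper. The paper simply quotes Rosser--Schoenfeld's explicit bound $\phi(q) > q/\bigl(e^{\gamma}\log\log q + 2.5/\log\log q\bigr)$ (valid for $q \ge 3$, $q \ne 223092870$), observes that the right side exceeds $q/(2.61\log\log q)$ once $\log\log q \ge 1.74$, i.e.\ $q \ge 299$, and then verifies $q \in [7,298]$ together with the excluded modulus $223092870$ by direct computation. You instead exploit the fact that $q/\phi(q)$ depends only on the radical of $q$, reducing everything to the primorial inequalities $M_k < 2.61\log\log P_k$; this is sound, and your two-case split works: for $k \ge 4$ the inequality holds (tightest at $k=4$, where $4.375 < 4.3758$) and propagates to all $q$ with $k$ distinct prime factors by monotonicity of $\log\log$, while for $k \le 3$ any violator satisfies $q/\phi(q) \le 15/4$ and hence $q \le 67$, leaving a finite check that correctly isolates $\{8,10,12,18,30,42,60\}$. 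What your approach buys: a much smaller explicit verification range ($q \le 67$ plus a handful of primorials instead of $[7,298]$), automatic handling of the Rosser--Schoenfeld exceptional modulus $223092870 = P_9$, and a structural explanation of why the constant $2.61$ and the exceptional set are what they are (the $q=210$ bottleneck). What it costs: your large-$k$ step needs two explicit inputs (an effective Mertens product bound and a lower bound for $\theta$) rather than the single ready-made $\phi$ estimate the paper uses, and those constants would still have to be pinned down to make the argument fully explicit. One small inaccuracy in a parenthetical remark: $q=66$ is not a near-miss ($\phi(66)=20$ against a required bound of about $17.7$), though this does not affect the argument.
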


\begin{proof}  
Rosser and Schoenfeld \cite{rs} showed that 
\begin{equation*}
\phi(q) > \frac{q}{e^{\gamma}  \log \log q + \frac{2.5}{\log \log q}},
\end{equation*}
for all integers $q\geq 3$ with $q \neq 223092870$, where
$\gamma = 0.5772156649\ldots$ is the Euler–Mascheroni constant.
Hence, for $q \ge 3$ with $q \neq 223092870$, we want to show that
\[
\frac{q}{e^{\gamma}  \log \log q + \frac{2.5}{\log \log q}}  > \frac{q}{2.61 \log \log q} 
\]
which is equivalent to
\[
2.61 \log \log q > e^{\gamma}  \log \log q + \frac{2.5}{\log \log q}.
\]
This inequality holds provided
\[
 \log \log q \ge  \frac{3.02}{\log \log q},
 \]
 which in turn holds if $\log \log q \ge 1.74$ or, equivalently, if $q \geq 299$.  
Now, one can verify the lemma by computing $\phi(q)$ and $q/(2.61 \log \log q)$ directly for  $q = 223092870$ and all integers $q \in [7, 298]$.
 \end{proof}

\begin{proposition}\label{loglog} If $q$ is an integer with $q \ge 8$, then 
\begin{equation*}
\phi(q-1) > \frac{q}{3.4504 \log \log q}.
\end{equation*}
\end{proposition}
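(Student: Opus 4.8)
The plan is to deduce the bound from Lemma~\ref{bound_on_phi}, applied to $m=q-1$ rather than to $q$, and then to absorb the change of argument into a slightly larger constant. Since $q\ge 8$ forces $m=q-1\ge 7$, Lemma~\ref{bound_on_phi} is available as long as $m\notin\{8,10,12,18,30,42,60\}$, that is, as long as $q\notin\{9,11,13,19,31,43,61\}$. For all such $q$ it gives
\[
\phi(q-1)=\phi(m)>\frac{m}{2.61\log\log m}=\frac{q-1}{2.61\log\log(q-1)}.
\]

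Next I would compare the right-hand side with $q/(3.4504\log\log q)$. As $t\mapsto\log\log t$ is increasing for $t>e$ and $q-1\ge 7>e$, we have $\log\log(q-1)\le\log\log q$, whence
\[
\frac{q-1}{2.61\log\log(q-1)}\ge\frac{q-1}{2.61\log\log q}.
\]
So it is enough that $3.4504(q-1)\ge 2.61\,q$, i.e. $0.8404\,q\ge 3.4504$; this holds for every integer $q\ge 5$, in particular for $q\ge 8$. Stringing the inequalities together proves the proposition whenever $q\notin\{9,11,13,19,31,43,61\}$, the final inequality being strict because the first one is.

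It remains to handle those seven values, which I would dispatch by a direct numerical check of $\phi(q-1)$ against $q/(3.4504\log\log q)$; a small table suffices. The only subtle case is $q=13$: there $\phi(12)=4$ while $13/(3.4504\log\log 13)=3.999\ldots$, so the inequality holds but only barely, and this is precisely what pins down the constant $3.4504$ (the value $q=11$ is the next tightest). I expect no genuine obstacle; the sole point requiring care is carrying enough decimal precision in the $q=13$ computation to confirm the strict inequality.
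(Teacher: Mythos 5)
Your proof is correct and follows essentially the same route as the paper: apply Lemma~\ref{bound_on_phi} to $q-1$, use that $\log\log(q-1)\le\log\log q$, absorb the passage from $q-1$ to $q$ into the constant $3.4504$, and finish the remaining cases (including the tight case $q=13$, where $\phi(12)=4$ against $3.999\ldots$) by direct computation. The only difference is bookkeeping: the paper runs the analytic argument for $q\ge 62$ (so $q-1$ automatically avoids the lemma's exceptional set, at the cost of a $62/61$ factor) and checks all of $q\in[8,61]$ numerically, whereas you invoke the lemma down to $q-1=7$ and need only check the seven values $q\in\{9,11,13,19,31,43,61\}$.
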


\begin{proof}
Let $q$ be an integer with $q \ge 8$.  
We know by Lemma \ref{bound_on_phi} that 
\[
\phi(q-1) > \frac{q-1}{2.61 \log \log (q-1)} ,
\quad \text{ for } q \ge 62.
\]
We can also see that when $q \geq 62$, we have 
\begin{equation*}
\frac{ \frac{62}{61} (q-1)} {\log \log (q-1)} \geq \frac{\frac{q}{q-1} (q-1)}{\log \log (q-1)} = \frac{q}{\log \log (q-1)} > \frac{q}{\log \log q},
\end{equation*}
thus revealing that 
\begin{equation*}
\phi(q-1) > 
\frac{q-1}{2.61 \log \log (q-1)} \geq \frac{q}{\left( \frac{62}{61} \right) 2.61 \log \log q} > \frac{q}{3.4504 \log \log (q)}.
\end{equation*}
A computation for $q \in [8, 61]$ completes the proof.
\end{proof}

\subsection{A proof of Theorem~\ref{lnlq}}

Let $s$, $b$, $q$ and $n$ be integers satisfying
$s \ge 2$, $b \ge 2$, $q > q_{s}$ and $n > N_{s}$. 
We want to prove that \eqref{eq:1} does not hold.  
Assume for some such $s$, $b$, $q$ and $n$ that \eqref{eq:1} does hold.  

Since $q > q_{s} = \max\{ 7,s \}$,  
Proposition \ref{cyclo} implies that 
\[
\frac{\Phi_{q-1}(b)}{\gcd(\Phi_{q-1}(b),q-1)} 
\] 
has all its prime factors $> s$ and is therefore coprime to $s$. 
Note that $\Phi_{q-1}(b)$ divides the right side of \eqref{eq:1}, and thus $ns^n$. 
So $\Phi_{q-1}(b)/\gcd(\Phi_{q-1}(b),q-1)$ must be a factor of $n$, and we have
\[
n\geq \frac{\Phi_{q-1}(b)}{\gcd(\Phi_{q-1}(b),q-1)} \geq \frac{\Phi_{q-1}(b)}{q-1}.
\]
By Propositions \ref{roitman} and \ref{loglog}, we have
\[
n \ge \frac{b^{\phi(q-1)}}{2(q-1)} > \frac{b^{\frac{q}{3.4504\log\log{q}}}}{2(q-1)}.
\]
Applying Lemma \ref{lemma2} and then Lemma \ref{lemma1} , we deduce
\[
n > \frac{s^{\frac{n}{3.4504\log\log{q}}}}{2(q-1)} > \frac{s^{\frac{n}{3.4504\log\log(c_s n)}}}{2c_s n}.
\]
Taking logarithms of both sides, with some rewriting, we deduce
\[
n \log(s) < 3.4504 \,\big(2 \log n + \log(2c_{s})\big) \log\log(c_{s}n).
\]
Since $n > N_{s}$, we have $n > c_{s}^{2}$.  Hence,
\begin{align*}
n \log(s) &< 3.4504 \,(2 \log n + \log(2n^{1/2})) \log\log(n^{3/2}) \\[5pt]
&= 3.4504 \,(2.5 \log n + \log 2) \big(\log\log n + \log (3/2) \big).
\end{align*}
The functions $x/\log x$ and $x/\log^{2}x$ are increasing functions for $x > e$ and for $x > e^{2} = 7.389\ldots$, respectively,
which implies
\[
\log x <
\begin{cases}
x/26.2169 &\text{for } x \ge 127 \\[5pt]
x/3.0702 &\text{for } x \ge \log(127)
\end{cases}
\]
and
\[
\log^{2} x < x/5.412, \quad \text{ for } x \ge 127.
\]
Since $n > N_{s} \ge 127$, we obtain
\[
\log n < \dfrac{n}{26.2169}, \quad \log\log n < \dfrac{\log n}{3.0702}, \quad \text{ and } \quad
\log^{2}n < \dfrac{n}{5.412}.
\]
Thus,
\begin{align*}
n \log(s) &< 3.4504 \,(2.5 \log n + \log 2) \bigg(\dfrac{\log n}{3.0702} + \log (3/2) \bigg) \\[5pt]
&< 2.8096 \log^{2} n + 4.2766 \log n + 0.9698 \\[5pt]
&< 2.8096 \log^{2} n + 4.2766 \log n +  \dfrac{0.9698 \log n}{\log 127} \\[5pt]
&< 2.8096 \log^{2} n + 4.4768 \log n \\[5pt]
&< 0.5192 \,n + 0.1708 \,n = 0.69 \,n < n \log(2).
\end{align*}
As $s \ge 2$, this is a contradiction, finishing the proof.

\section{Large $n$ and small $q$}

In this section, we establish the following.

\begin{theorem}\label{theorem3}
For each integer $s \ge 2$, there exists a positive integer $\mathcal{N}_s$ such that there are no solutions to \eqref{eq:1} 
in integers $n$, $q$ and $b$ satisfying $n > \mathcal{N}_s$, $3 \le q \le q_s$ and $b \ge 2$.
\end{theorem}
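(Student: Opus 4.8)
The plan is to fix a value of $q$ with $3 \le q \le q_s$ and treat \eqref{eq:1} as asserting that $ns^n$ has a very rigid multiplicative shape, namely $ns^n = b\Phi(b)$ where $\Phi(b) = b^{q-2}+\cdots+b+1 = (b^{q-1}-1)/(b-1)$. The key structural observation is that $b$ and $b^{q-1}-1$ are coprime, so the prime factorizations of $b$ and of $(b^{q-1}-1)/(b-1)$ involve disjoint sets of primes. Since the left side is $n s^n$, and $s$ is fixed, this forces strong divisibility constraints. I would split into the two cases according to whether $\gcd(b, s) > 1$ or $\gcd(b,s) = 1$; in the former case $b$ absorbs a bounded-index piece of the prime support of $s$ and most of the factor $s^n$ must divide $(b^{q-1}-1)/(b-1)$, while in the latter case $s^n \mid (b^{q-1}-1)/(b-1)$ up to the small factor coming from $n$.

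The heart of the argument is a size estimate forcing $n$ to be bounded once $q$ is fixed. First I would show that \eqref{eq:1} with $q \le q_s$ pins down $b$ up to $O(1)$: from $b^{q-1} > b\Phi(b) = ns^n$ and $b\Phi(b) \ge b \cdot b^{q-2} = b^{q-1}$ in reverse — more precisely $b^{q-1}/(b-1) \le \Phi(b) \le b^{q-1}$, so $ns^n = b\Phi(b)$ gives $b^{q-1} \le ns^n \le b^q$ — so $b = (ns^n)^{1/(q-1)+o(1)}$, i.e. $b$ grows like $s^{n/(q-1)}$. Now I would use that $\Phi_q(b)$ (the $q$-th cyclotomic value when $q$ is prime, or an appropriate cyclotomic factor of $b^{q-1}-1$ in general) divides $(b^{q-1}-1)/(b-1)$ and hence divides $n s^n$; by Proposition~\ref{cyclo}, since $q \le q_s$ but the primes dividing such a cyclotomic value are $\ge$ the relevant modulus, one controls whether $s$ can divide it. The cleanest route: because $s$ is fixed, write $s = \prod p_i^{a_i}$, and observe that for $s^n$ to divide $b\Phi(b)$ (up to the factor $n$), each prime $p_i$ must divide $b$ or divide $\Phi(b)$; using the standard lifting-the-exponent estimate, $v_{p_i}(b^{q-1}-1) \le v_{p_i}(b^{\mathrm{ord}}-1) + v_{p_i}(q-1)$ is $O(\log b)$, so $v_{p_i}(b\Phi(b)) = O(q \log b) = O(n)$ with an explicit small implied constant depending on $s$ and $q$. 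Combined with $v_{p_i}(ns^n) \ge a_i n$, this yields $a_i n \le (\text{const}) \log b + O(\log n)$, and since $\log b \le \frac{q}{q-1}\log s \cdot \tfrac{n}{q-1}\cdot(1+o(1))$ is itself linear in $n$, the inequality becomes $a_i n \le C_q \, n / (q-1) + O(\log n)$ where $C_q$ can be made $< (q-1) a_i$ by choosing the constant carefully — this is exactly where $q$ being \emph{bounded} is used, since then $1/(q-1)$ is bounded away from $0$...

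Here is the subtlety, and the main obstacle: the crude bound $b^{q-1} \le ns^n$ only gives $\log b \lesssim \frac{1}{q-1}(n\log s + \log n)$, and when we demand $s^n \mid b\Phi(b)$ the contribution to $v_p$ from the factor $b$ itself can be as large as $\frac{1}{q-1}n\log s/\log p$, which is \emph{not} automatically smaller than $a_i n$. So a pure valuation count is not enough; I expect one must instead exploit that $b$ and $\Phi(b)$ are coprime together with the exact identity $ns^n = b\,\Phi(b)$ to write $b = s_1^n m_1$, $\Phi(b) = s_2^n m_2$ with $s = s_1 s_2$, $\gcd(s_1,s_2)=1$ coprimality forcing a clean split, and $n = m_1 m_2$ bounded-support. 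Then $\Phi(s_1^n m_1) = s_2^n m_2$ becomes a polynomial Diophantine equation in the single scaling variable $n$ with $m_1, m_2 \mid n$ also growing only logarithmically; comparing the dominant terms $b^{q-2} = s_1^{n(q-2)} m_1^{q-2}$ against $s_2^n m_2$ forces $(q-2)\log s_1 \le \tfrac1n\log(s_2^n m_2/m_1^{q-2}) \le \log s_2 + o(1)$, i.e. $s_1^{q-2} \le s_2 \cdot s^{o(1)}$, which for $q \ge 4$ already constrains $s_1 = 1$ unless $s$ is tiny; and for $q = 3$ one is reduced to $b^2+b+1 = ns^n$ type equations handled by the Pell-equation analysis alluded to in the introduction together with the observation that a fixed $s$ meeting the triangular-number condition $s^2 = b(b+1)/2$ forces $n = 2$. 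In all cases one extracts an explicit $\mathcal N_s$; I would assemble the case bounds and set $\mathcal N_s$ to their maximum.
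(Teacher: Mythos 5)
Your structural setup coincides with the paper's: writing $f_q(b)=b^{q-2}+\cdots+b+1$, using $\gcd(b,f_q(b))=1$ to split $s=uv$ (your $s_1s_2$) with $u=\gcd(b,s)$, $v=\gcd(f_q(b),s)$, and noting $b=ku^n$ with $k\mid n$, so that $f_q(b)=(n/k)v^n$. Your ``dominant term'' comparison is, in effect, the paper's first case: if $u^{q-2}>v$, then $u^{(q-2)n}<f_q(b)\le nv^n$ gives $(1+1/v)^n\le (u^{q-2}/v)^n<n$, which fails for $n$ large. The problem is the endgame. Your claim that $s_1^{q-2}\le s_2\cdot s^{o(1)}$ ``constrains $s_1=1$'' for $q\ge 4$ is false --- take $s=2p$ with $p$ a large prime, $s_1=2$, $s_2=p$, $q=4$ --- so the case $u\ge 2$ with $u^{q-2}<v$ is never eliminated; and even when $s_1=1$ you stop at a structural statement without deriving any contradiction. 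That entire regime requires a second, symmetric size estimate which your write-up does not supply: since $k\le n$, one has $v^n\le f_q(b)=f_q(ku^n)\le k^{q-2}f_q(u^n)\le n^{q}f_q(u^n)<2n^{q}u^{(q-2)n}$, whence $(1+1/s)^n<(v/u^{q-2})^n<2n^{q_s}$, impossible for $n$ large; this is exactly how the paper closes the case $u^{q-2}<v$ (including $u=1$), and it is the step your proposal is missing. Your earlier lifting-the-exponent attempt you abandon yourself, correctly, so it does not fill this gap.

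The treatment of $q=3$ is also not a proof. The Pell/triangular-number discussion in the introduction shows that infinitely many $s$ admit an $n=2$ solution as $s$ varies; it says nothing about the nonexistence, for a \emph{fixed} $s$, of solutions to $b(b+1)=ns^n$ with $n$ large, and the assertion that the triangular condition ``forces $n=2$'' is precisely what would need to be proved. No reduction is needed: the two-sided comparison above (the paper's two cases $u^{q-2}>v$ and $u^{q-2}<v$, equality being impossible since $\gcd(u,v)=1$ and $uv\ge 2$) handles $q=3$ uniformly with the rest, and yields an explicit $\mathcal N_s$ via the condition $2^{1/n}n^{q_s/n}\le 1+1/s$.
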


Before giving the proof, we note that $\mathcal{N}_s$ can be made explicit.  
For $s \ge 2$ fixed, we will choose $\mathcal{N}_s$ so that $n > \mathcal{N}_s$ implies
\begin{equation}\label{nsbound}
2^{1/n} n^{q_s/n} \le 1+ \dfrac{1}{s}.
\end{equation}
As the left-hand side decreases to $1$ for $n \ge 3$, such an $\mathcal{N}_s$ exists and is easily computed.
In the argument below, we will also want $n^{1/n} \le 1 + (1/s)$ for $n > \mathcal{N}_s$, which we note is implied by \eqref{nsbound}.

\begin{proof}[Proof of Theorem~\ref{theorem3}]
Let 
\begin{equation}\label{definitionfq}
f_q(b)=b^{q-2}+ b^{q-3}+ \cdots+1. 
\end{equation}
Then \eqref{eq:1} can be written as $ns^n=bf_q(b)$. 
Since $\gcd(b,f_{q}(b))=1$, we can write $s=uv$, where $\gcd(u,v) = 1$, $u=\gcd(b,s)$ and $v=\gcd(f_q(b),s)$.  
Let $k$ be such that $b=ku^n$. Note that $k$ is an integer dividing $n$ so that $1 \le k \le n$.  

Observe that $s=uv \ge 2$ and $\gcd(u,v) = 1$
imply that $u^{q-2} \ne v$. We consider two cases,
depending on whether $u^{q-2} >v$ or $u^{q-2} < v$.  

First, we assume that $u^{q-2} > v$.  
For any $q$, we have that 
\[
f_q(b) = \frac{ns^n}{b} = \frac{ns^n}{k u^n} = \frac{n}{k} v^n \leq nv^n.
\]
We also have that 
\[
f_q(b)\ge f_q(u^n)>u^{(q-2)n}.
\]  
Thus, $n > (u^{q-2}/v)^n$.  
Because  $u^{q-2}\ge v+1$, we have
\[
n > \bigg( 1+ \dfrac{1}{v} \bigg)^n \ge \bigg( 1+ \dfrac{1}{s} \bigg)^n.
\] 
We choose $\mathcal{N}_s$ large enough so that this cannot hold for $n > \mathcal{N}_{s}$.
Therefore, we obtain a contradiction in the case that $u^{q-2} > v$.

Now, assume that $1 \le u^{q-2} < v$. 
We have that 
\[
v^n\le f_q(b) = f_q(ku^n) \le k^{q-2} f_{q}(u^n)
\le n^{q} f_{q}(u^n) < 2 \,n^{q} u^{(q-2)n}.
\] 
Since our assumption is now that $1 \le u^{q-2} < v$, we deduce that
\[
2n^{q_{s}} \ge 2n^q > \bigg( \dfrac{v}{u^{q-2}} \bigg)^{n} \ge \bigg( \dfrac{v}{v-1} \bigg)^{n} = \bigg(1 + \dfrac{1}{v-1} \bigg)^{n} > \bigg( 1 + \dfrac{1}{s} \bigg)^{n}.
\]
We choose $\mathcal{N}_s$ large enough so that \eqref{nsbound} holds,
giving a contradiction and finishing the proof.
\end{proof}

Observe that Theorem~\ref{lnlq} and Theorem~\ref{theorem3} imply Theorem~\ref{themaintheorem} as follows.
Let $M_{s} = \max\{ N_{s}, \mathcal{N}_{s} \}$.  Then Theorem~\ref{lnlq} and Theorem~\ref{theorem3} imply that
for fixed $s \ge 2$ and all integers $n > M_{s}$, there are no integers $b \ge 2$ and $q \ge 3$ satisfying \eqref{eq:1}.  
Thus, there are $\le M_{s}$ different $s$-Cullen numbers which are repunits. 

\section{Computations}

In this section, we describe computations we did to obtain the following result. 

\begin{theorem}\label{theorem4}
For each integer $s \in [2,\currentbound]$, there are no $s$-Cullen numbers which are repunits other than those described before Theorem~\ref{themaintheorem}.
\end{theorem}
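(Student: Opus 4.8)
The plan is to make Theorems~\ref{lnlq} and~\ref{theorem3} explicit and then run the resulting finite search. Fix $s \in [2,\currentbound]$. By Theorem~\ref{lnlq} every solution of~\eqref{eq:1} has $q \le q_s$ or $n \le N_s$, and in the latter case Lemma~\ref{lemma1} adds $q < c_s n \le c_s N_s$. Since $N_s \le 199$ over our range, the regime $n \le N_s$ is a small rectangle of pairs $(n,q)$; for each pair, $(b^q-1)/(b-1)$ is strictly increasing in $b$, so $C_{n,s}=ns^n+1$ is a $q$-digit base-$b$ repunit for at most one $b\ge 2$, which we locate by a short search near $C_{n,s}^{1/(q-1)}$ and then test~\eqref{eq:1} as an exact identity.

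For the regime $q \le q_s$ we follow the proof of Theorem~\ref{theorem3}: write $s=uv$ with $u=\gcd(b,s)$, $v=\gcd(f_q(b),s)$, $\gcd(u,v)=1$, $uv=s$, and $b=ku^n$ with $k\mid n$, so that~\eqref{eq:1} becomes $f_q(ku^n)=(n/k)v^n$. If $u^{q-2}\ge v$ then $u^{q-2}=v$ is impossible while $u^{q-2}>v$ forces $n>(1+1/v)^n$, a tiny explicit bound. If $u^{q-2}<v$ and $u\ge 2$, then $2^{q-2}\le u^{q-2}<v\le s$ forces $q\le 15$ for $s\le\currentbound$, while $u^{q-1}<uv=s$ bounds $u$ and $(1+1/(v-1))^n\le (v/u^{q-2})^n<2n^q$ gives an explicit bound $n\le\mathcal N_s'$. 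If $u=1$ (so $v=s$), then $s^n\le f_q(b)<2n^{q-1}$, together with $b\mid n$ (as $\gcd(b,s)=1$) and $q\le q_s$, bounds $n$ and $b$ explicitly. In every branch we are left with finitely many quadruples $(q,u,v,k)$ and a bounded range of $n$, and the question is whether $f_q(ku^n)=(n/k)v^n$ holds.

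The main obstacle is running time: the crude bound $\mathcal N_s'$ can reach the order of $10^6$, and the integers $f_q(ku^n)$ and $(n/k)v^n$ one would compare have millions of digits, so a direct loop is infeasible. The remedy is to sieve before testing. Since every non-constant term of $f_q(ku^n)$ is divisible by $u^n$, we have $(n/k)v^n\equiv 1\pmod{u^n}$; checking this modulo $u^j$ for small $j$ (using $\gcd(u,v)=1$, so $v$ is a unit) is cheap and eliminates all but a handful of $n$ for each $(q,u,v,k)$, which are then tested exactly. When $u=1$ the analogous constraint is $s\mid f_q(b)$, a restrictive congruence on $b$ modulo $s$; combined with $b\mid n$, $b\le n$, and the bound above this leaves a short list of pairs $(b,q)$, and for each the unique possible $n$ (found using that $n\mapsto ns^n$ is increasing) is checked against~\eqref{eq:1}. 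Only the surviving candidates require big-integer arithmetic, and all equality and divisibility tests are carried out exactly.

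Finally one confirms that the search returns exactly the expected output. The subcase $q=3$ genuinely contributes solutions, namely $C_{2,s}=b^2+b+1$ whenever $s^2$ is triangular, i.e.\ whenever $1+8s^2$ is a perfect square, which for $s\le\currentbound$ happens precisely for $s\in\{6,35,204,1189,6930\}$, each with $n=2$ and $q=3$; the computation must return these and no other solution with $q=3$, none with $q\ge 4$ and $n\ge 2$, and none in the regime $q>q_s$. Performing this for all $s\in[2,\currentbound]$ establishes Theorem~\ref{theorem4}.
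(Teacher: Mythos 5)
Your proposal is correct and follows the same overall strategy as the paper: use the dichotomy of Theorem~\ref{lnlq}, make the case analysis of Theorem~\ref{theorem3} (the unitary factorization $s=uv$, $b=ku^n$, $k\mid n$) explicit, and finish with a pruned finite search. The differences are organizational. The paper takes the exhaustively-searched rectangle to be $n\le\max(s,N_s)$ and then proves a dedicated proposition that any solution with $n>\max(s,N_s)$ has $q<\log_2 s+1.53$, which is its main device for taming the huge bound $\mathcal N_s$; surviving candidates in both regimes are killed by congruences $ns^n\equiv b f_q(b)\pmod p$ for small primes $p$. You instead keep the rectangle at $n\le N_s\le 199$ (locating the unique candidate $b$ near $C_{n,s}^{1/(q-1)}$, which is fine) and recover the needed cap on $q$ piecemeal inside the regime $q\le q_s$: explicitly from $2^{q-2}\le u^{q-2}<v\le s$ when $u\ge 2$ and $u^{q-2}<v$, implicitly in the branch $u^{q-2}>v$ (where $(u^{q-2}/v)^n<n$ forces $u^{q-2}<3^{1/3}v$), and in the $u=1$ branch from $s^n\le f_q(b)<2n^{q-2}\le 2n^{q-1}$ together with $b\mid n$ --- the same $b\mid n$ device the paper uses inside its proposition via $b^q>s^n$. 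Your sieve $(n/k)v^n\equiv 1\pmod{u^j}$ is a valid substitute for the paper's small-prime sieve when $u\ge 2$, and your expected output ($n=2$, $q=3$, $s\in\{6,35,204,1189,6930\}$, plus the excluded $n=1$ family) is the right one. The only caveat is cost rather than correctness: in your $u=1$ branch $q$ still ranges up to $q_s=s$ with $b\le n$ of size roughly $10^4$, so the enumeration over pairs $(b,q)$ for all $s\le\currentbound$ is noticeably heavier than the paper's (whose $q<\log_2 s+1.53$ cap was introduced precisely to avoid this), but your bounds keep everything finite and explicit, so the verification goes through.
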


There were no particularly interesting examples that arose in our computations described below.  Although, for example,
we encountered 
$C_{158,313} = 158 \cdot 313^{158}+1$
which in base $2$ has its $19$ right-most digits/bits all $1$, the number itself has over $1318$ bits and is far from being a repunit.

In \eqref{eq:1}, the number $q$ represents the number of digits of $ns^{n}+1$ in base $b$, and the case where $q \le 2$
has been described in detail prior to Theorem~\ref{themaintheorem}.  So we may assume $q \ge 3$.  
For Theorem~\ref{theorem4}, we want to consider all integers $s \in [2,\currentbound]$.
Observe that \eqref{eq:1} implies 
\begin{equation}\label{universalqbound}
q < \dfrac{\log(ns^n)}{\log 2}.
\end{equation}
We will also consider all $n \le \max(s,N_s)$ with $q$ as above.

Next, we address
which $n > \max(s,N_s)$ need be considered as well, where such $n$ will depend on $s$.
The bound $\mathcal{N}_{s}$ in Theorem~\ref{theorem3} can be large and is the main difficulty in using the results in the prior 
sections directly.  For example, using $q_{1024} = 1024$, the bound $\mathcal N_{1024}$ based on \eqref{nsbound} is $\mathcal N_{1024} = 17496911$.  
Below we will show that in the second half of the argument of Theorem~\ref{theorem3}, we can restrict $q$ significantly further.

\begin{proposition}
If $n > \max(s,N_s)$, then $q < \log_2{s}+1.53$.
\end{proposition}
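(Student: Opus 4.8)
The plan is to reuse the arithmetic decomposition from the proof of Theorem~\ref{theorem3} and to feed in the extra information that, because $n>N_s$, Theorem~\ref{lnlq} forces $q\le q_s$. So fix a solution of \eqref{eq:1} with $n>\max(s,N_s)$, set $u=\gcd(b,s)$, and, exactly as in the proof of Theorem~\ref{theorem3}, write $b=ku^n$ where $k$ is a positive divisor of $n$; in particular $1\le k\le n$, and $q\le q_s$ by Theorem~\ref{lnlq}. I would then treat separately the case $u\ge 2$ and the case $u=1$, with $f_q$ as in \eqref{definitionfq}.

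In the case $u\ge 2$, note that $b=ku^n\ge u^n\ge 2^n$, so $f_q(b)>b^{q-2}\ge 2^{n(q-2)}$, while at the same time $f_q(b)=ns^n/b=(n/k)(s/u)^n\le n(s/2)^n=ns^n/2^n$. Putting the two together gives $2^{n(q-1)}<ns^n$, and taking $n$-th roots yields $2^{q-1}<n^{1/n}s$, whence $q<\log_2 s+1+(\log_2 n)/n$. Since $(\log_2 x)/x$ is, over the positive integers $x$, largest at $x=3$, where it equals $(\log_2 3)/3<0.53$, we obtain $q<\log_2 s+1.53$. Notice that this case uses neither $n>s$ nor $n>N_s$.

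It remains to rule out the case $u=\gcd(b,s)=1$. Then $b\mid ns^n$ and $\gcd(b,s^n)=1$ force $b\mid n$, so $2\le b\le n$, and hence
\[
s^n\le \frac{ns^n}{b}=f_q(b)<2b^{q-2}\le 2n^{q-2}\le 2n^{q_s-2},
\]
the last step using $q\le q_s$. I would then contradict $s^n<2n^{q_s-2}$ by an elementary comparison of the exponential $s^n$ with the polynomial $n^{q_s-2}$, splitting according to $q_s=\max\{7,s\}$: in the regime $q_s=7$ one has $N_s=127$, so $n\ge 128$ and $s^n\ge 2^n>2n^5=2n^{q_s-2}$; in the regime $q_s=s$ one has $n>s$, and since $s^n/n^{s-2}$ is increasing in $n$ for $n\ge s$ and already equals $s^2\ge 49$ at $n=s$, again $s^n>2n^{s-2}$. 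Either way $s^n<2n^{q_s-2}$ is impossible, so this case does not arise, and combining with the previous paragraph completes the proof.

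The easy case is $u\ge 2$: a one-line size estimate. The crux — and the only place where the hypotheses $n>s$ and $n>N_s$ (the latter only through $q\le q_s$, Theorem~\ref{lnlq}) actually enter — is excluding the coprime case $u=1$, where the naive size bound yields merely $s^n<2n^{q_s-2}$, an inequality that is harmless in isolation. So the argument must genuinely exploit $q\le q_s$ together with the exponential-versus-polynomial comparison in the two regimes $q_s=7$ and $q_s=s$.
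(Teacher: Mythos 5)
Your proposal is correct and takes essentially the same route as the paper: the same $s=uv$, $b=ku^{n}$ decomposition borrowed from Theorem~\ref{theorem3}, Theorem~\ref{lnlq} to force $q\le q_{s}$, and the same exponential-versus-polynomial contradiction in the two regimes $q_{s}=7$ and $q_{s}=s$ to kill the coprime case $u=1$. The only differences are organizational: for $u\ge 2$ you derive $2^{q-1}<n^{1/n}s$ directly (merging the paper's two sub-cases $u^{q-2}>v$ and $u^{q-2}<v$, which argue by contradiction from $q\ge\log_{2}s+1.53$), and in the case $u=1$ you use the bound $f_{q}(b)<2b^{q-2}$ in place of the paper's appeal to Lemma~\ref{lemma2}.
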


\begin{proof}
Assume that we have a solution to \eqref{eq:1} with $q \ge \log_2{s}+1.53$.
Since $n > \max(s,N_s) \ge N_{s}$, Theorem~\ref{lnlq} implies $q \le q_{s} = \max\{7,s \}$.  

We follow the argument for Theorem~\ref{theorem3}. 
In that argument, we considered $s = uv$, where $\gcd(u,v) = 1$, and $b = k u^{n}$ where $k \mid n$.  
There were two cases, where $u^{q-2} > v$ and where $u^{q-2} < v$.  
Following the argument for Theorem~\ref{theorem3}, we see that
for  the case $u^{q-2} > v$, we obtain a contradiction if $n^{1/n} \le u^{q-2}/v$;
and in the second case, where $u^{q-2} < v$, we obtain a contradiction if $(2n^{q})^{1/n} \le v/u^{q-2}$.
The rest of the argument for Theorem~\ref{theorem3} is based on the fact that the left-hand sides of these last two inequalities involving $n$
tend to $1$ for fixed $s$ as $n$ tends to infinity, and hence for large enough $n$ they hold.  
We next examine the cases $u^{q-2} > v$ and $u^{q-2} < v$ more thoroughly.

First, consider the case $u^{q-2}>v$. 
We have $u\ge 2$, so 
\[
\dfrac{u^{q-2}}{v} \ge \dfrac{2^{q-2}}{s/2} = \dfrac{2^{q-1}}{s}.
\]
By the assumption on $q$, this is greater than or equal to $2^{0.53} > 1.443$.
As the maximum value of $n^{1/n}$ for $n$ a positive integer is $3^{1/3} < 1.443$, we
deduce $n^{1/n} \le u^{q-2}/v$, implying the desired contradiction.

Next, consider $v>u^{q-2}$. If $u\ge 2$, then
\[
s > u^{q-1} \ge 2^{q-1} \ge 2^{0.53} s > s,
\] 
which is a contradiction.

If $u = 1$, then $b = k$ divides $n$ and $s = uv = v$.  
Furthermore, \eqref{eq:1} implies $\gcd( (n/b) s, b) = 1$.  
From Lemma~\ref{lemma2}, we have $b^q > s^n$.  
Thus, $b^{\max\{7,s\}} = b^{q_{s}} > s^{n}$.  
Since $b$ divides $n$ and $n > N_s$, we deduce
\begin{equation}\label{casetwoeq1}
n \ge b > s^{n/q} \ge s^{n/q_{s}}.
\end{equation}
If $q_{s} = 7$, then 
\[
n > (s^{1/q_{s}})^{n} \ge (2^{1/7})^{n} > 1.104^n,
\]
which does not hold for $n > 127$ and, hence, for $n > N_{s}$.  
For $q_{s} = s$, we use that $(s^{1/s})^{n} = n$ if $n = s$.  
For $s \ge 3$, the derivative of $(s^{1/s})^{x}$ as a function of $x$ 
is greater than $1$ for $x > s$ so that the Mean Value Theorem implies
$(s^{1/s})^{n} > n$ for all $n > s$.  
Since $n > \max(s,N_s) \ge s$, we see that we obtain a contradiction in \eqref{casetwoeq1} when $q_{s} = s$.
In the case that $q_{s} = s = 2$, a similar argument gives that \eqref{casetwoeq1} implies $n < 4$,
leading to a contradiction since $n > N_s \ge 127$.
\end{proof}

We describe now how we performed calculations to verify there are no solutions to \eqref{eq:1}, 
other than those described before Theorem~\ref{themaintheorem}, for all $s \in [2, \currentbound]$.  

For each such $s$, we consider all integers $n \in [1,\max(N_s,s)]$ and possibly more.

First, we rule out any new solutions to \eqref{eq:1} for $n \in [1,\max(N_s,s)]$. 
Let $f_{q}(b)$ be as defined in \eqref{definitionfq}.  
Observe that with $s$, $n$ and $q$ fixed, the value of a positive real number $b$ satisfying \eqref{eq:1} is uniquely determined. 
For each $s \in [2, \currentbound]$, $n \in [1,\max(N_s,s)]$ and $q \ge 3$ satisfying \eqref{universalqbound}, 
which does not give rise to a solution to \eqref{eq:1} described before Theorem~\ref{themaintheorem}, 
we form congruences 
\[
ns^n \equiv b f_q(b) \pmod{p}
\] 
for small primes $p$, until we find a $p$ where the congruence is impossible. 
We deduce that there are no integers $b$ satisfying \eqref{eq:1} for each such $s$, $n$ and $q$. 

Next, we consider $s \in [2, \currentbound]$ and $n > \max(N_s,s)$.  
Fix such an $s$, and let $N'_{s} = \max(N_s,s)+1$. 
For each divisor $u$ of $s$, we set $v = s/u$.  
We check whether $u^{q-2} > v$ or $u^{q-2} < v$.

In the case that $u^{q-2} > v$,
we want to consider $n \ge N'_{s}$ for which $n^{1/n} > u^{q-2}/v$, where $3 \le q < \log_2{s}+1.53$. 
For $x > e$, the function $x^{1/x}$ is decreasing to $1$ as $x \rightarrow \infty$.  
As a consequence, if ${N'_{s}}^{1/N'_{s}} \le u^{q-2}/v$, then $n^{1/n} \le u^{q-2}/v$ for all $n \ge N'_{s}$ and we need only consider $n < N'_{s}$. 
Since $u^{q-2} > v$, in the case that ${N'_{s}}^{1/N'_{s}} > u^{q-2}/v$, there is a real number $x_{1} = x_{1}(u,v,q) > N'_{s}$ satisfying $x_{1}^{1/x_{1}} = u^{q-2}/v$.
Then the number $x_{1}$ provides an upper bound on $n$ we need consider; more precisely, defining 
\[
n_{1} = n_{1}(u,v,q) =
\begin{cases}
N'_{s} &\text{if } {N'_{s}}^{1/N'_{s}} \le u^{q-2}/v, \\
x_{1} &\text{if } {N'_{s}}^{1/N'_{s}} > u^{q-2}/v, 
\end{cases}
\]
for a given $(u,v,q)$ satisfying $u^{q-2} > v$, we need only consider $n < n_{1}$.

In the case that $u^{q-2} < v$,
we want to consider $n \ge N'_{s}$ for which $(2n^{q})^{1/n} > v/u^{q-2}$,
where $3 \le q \le \log_2{s}+1.53$.   
For $x > e$, the function $(2x^{q})^{1/x}$  is decreasing to $1$ as $x \rightarrow \infty$. 
If $(2  {N'_{s}}^{q})^{1/N'_{s}} > v/u^{q-2}$, then 
there is a real number $x_{2} = x_{2}(u,v,q) > N'_{s}$ satisfying $(2 x_{2}^{q})^{1/x_{2}} = v/u^{q-2}$.
Defining
\[
n_{2} = n_{2}(u,v,q) =
\begin{cases}
N'_{s} &\text{if $(2  {N'_{s}}^{q})^{1/N'_{s}} \le v/u^{q-2}$}, \\
x_{2} &\text{if $(2  {N'_{s}}^{q})^{1/N'_{s}} > v/u^{q-2}$},
\end{cases}
\]
an analysis similar to the above shows that for a given $(u,v,q)$ satisfying $u^{q-2} < v$, we need only consider $n < n_{2}$.

We now complete the computation.  
For each $s \in [2,\currentbound]$, we considered each factorization $s = uv$ with $\gcd(u,v) = 1$ 
and considered each $q \in [2, \log_2{s}+1.53] \cap \mathbb Z$. 
In each case, we computed $n_{0} = n_{1}(u,v,q)$ or $n_{0} = n_{2}(u,v,q)$ depending on whether $u^{q-2} > v$ or $u^{q-2} < v$, respectively.  
We tested each integer $n \in (N'_{s},n_{0}]$ and found a small prime $p$ for which the congruence $ns^n \equiv q f_q(b) \pmod{p}$ has no solutions in $b$. 
This completes the proof of Theorem~\ref{theorem4}.

\bibliographystyle{alpha}
\bibliography{ucr}

\begin{thebibliography}{GG21}

\bibitem[Dic05]{dickson}
Leonard~E. Dickson.
\newblock On the cyclotomic function.
\newblock {\em Amer.~Math.~Monthly}, 12:86--89, 1905.

\bibitem[Eul13]{euler}
Leonhard Euler.
\newblock Regula facilis problemata {D}iophantea per numeros integros expedite
  resolvendi.
\newblock {\em M\'emoires de l'Acad\'emie des Sciences de St.-P\'etersbourg},
  4:3--17, 1813.
\newblock Available at
  \url{https://scholarlycommons.pacific.edu/euler-works/739/}.

\bibitem[GG21]{ccr}
Jon Grantham and Hester Graves.
\newblock The abc conjecture implies that only finitely many $s$-{C}ullen
  numbers are repunits.
\newblock {\em J. Integer Seq.}, 24(2):21.5.3--6, 2021.

\bibitem[RS62]{rs}
J.~Barkley Rosser and Lowell Schoenfeld.
\newblock Approximate formulas for some functions of prime numbers.
\newblock {\em Illinois J. Math.}, 6:64--94, 1962.

\bibitem[TV11]{tv}
R.~Thangadurai and A.~Vatwani.
\newblock The least prime congruent to one modulo {$n$}.
\newblock {\em Amer. Math. Monthly}, 118(8):737--742, 2011.

\end{thebibliography}

\end{document}